\documentclass[10pt,a4paper]{article}

\usepackage{times}
\usepackage{enumerate}

\usepackage[english]{babel}

\usepackage{verbatim}

\usepackage{color}

\usepackage{a4wide}

\usepackage{amsfonts}
\usepackage{hyperref}

\usepackage{amsmath}

\usepackage{amscd}
\usepackage{tabularx}
\usepackage{arydshln}
\usepackage{verbatim}
\usepackage{color}
\usepackage[all]{xy}
\usepackage{mathtools} 
\usepackage{ textcomp }

\newtheorem{theorem}{Theorem}[section]

\newtheorem{proposition}[theorem]{Proposition}
\newtheorem{corollary}[theorem]{Corollary}
\newtheorem{Counter-example}[theorem]{Counter-example}
\newtheorem{remark}[theorem]{Remark}

\newtheorem{definition}[theorem]{Definition}
\newenvironment{proof}{\noindent{\it Proof.}}{\hfill$\blacksquare$}

\usepackage{amsmath}

\usepackage{tikz-cd}
\tikzset{commutative diagrams/.cd}
\usetikzlibrary{intersections}

\usepackage{amssymb}

\begin{document}

\def\Q{\mathbb Q}
\def\R{\mathbb R}
\def\N{\mathbb N}
\def\Z{\mathbb Z}
\def\C{\mathbb C}
\def\S{\mathbb S}
\def\L{\mathbb L}
\def\H{\mathbb H}
\def\K{\mathbb K}
\def\X{\mathbb X}
\def\Y{\mathbb Y}
\def\Z{\mathbb Z}
\def\E{\mathbb E}
\def\J{\mathbb J}
\def\I{\mathbb I}
\def\T{\mathbb T}
\def\H{\mathbb H}

\title{A note on tractor bundles and codimension  
  \\  two spacelike immersions}

\author{
Rodrigo Mor\'on
\\ 
 \texttt{Departamento de Matemáticas, Universidad de León} \\ 
 \texttt{$rmors@unileon.es$}\\
 \texttt{https://orcid.org/0000-0001-7651-5965}}

\date{}

\begingroup
\renewcommand{\thefootnote}{} 
\footnotetext[0]{2020 {\it Mathematics Subject Classification}. Primary 53B30, 53C18, 53C42. Secondary 53A35, 53A40, 53C50.}
\addtocounter{footnote}{-1} 
\endgroup
\begingroup
\renewcommand{\thefootnote}{} 
\footnotetext[0]{{\it Key words.}\, Conformal geometry, Lorentzian geometry, Spacelike submanifolds, Standard tractor bundles.}
\addtocounter{footnote}{-1} 
\endgroup

\maketitle

\begin{abstract}
\noindent We study conformal tractor bundles from an extrinsic viewpoint, relating them to codimension two spacelike immersions into Lorentzian manifolds. We show that, at least locally, every Riemannian conformal structure admits a natural realization of its normal conformal tractor bundle as the pullback of the tangent bundle of a suitably constructed Lorentzian ambient space. Finally, we reformulate the classical equations characterizing parallel sections of the normal conformal tractor bundle in this extrinsic setting, showing that they can be expressed entirely in terms of the geometry of the associated spacelike immersion. This extrinsic perspective provides additional geometric insight into parallel standard tractors and conformal holonomy.

\end{abstract}

\hyphenation{Lo-rent-zi-an}

\section{Introduction}

Conformal geometry admits several equivalent formulations, each of them highlighting different aspects of the underlying structure and providing access to distinct classes of invariants. At its most basic level, a Riemannian conformal structure on a smooth manifold is given by an equivalence class of Riemannian metrics, where two metrics are equivalent if they differ by a factor that is a smooth positive function on the manifold. Equivalently, a conformal structure can be described in terms of vector bundle data that encodes the same geometric information. This description is provided by the theory of conformal tractor bundles, whose origins go back to the work of Thomas \cite{{Thomas}} and which has since become a central tool in conformal geometry. In this approach, a conformal structure on an $(n\geq 3)$-dimensional manifold is encoded by a rank $n+2$ vector bundle endowed with a Lorentzian bundle metric, a distinguished lightlike line subbundle, and a compatible linear connection known as the normal tractor connection, see Definition \ref{12012025utygh} and the subsequent discussion. An alternative but equivalent formulation of conformal geometry is furnished by Cartan geometries of parabolic type, see for instance \cite{Cap1}, \cite{Cap} and \cite{CS09}. Among the various equivalent viewpoints, we shall restrict ourselves in this work to the tractor formalism.

In the recent work \cite{MP03192}, conformal tractor bundles have been studied from an extrinsic viewpoint by relating them to codimension two spacelike immersions in Lorentzian manifolds. It was shown that, given a spacelike immersion 
$\Psi\colon (M^n,g)\to(\widetilde{M}^{n+2},\widetilde{g})$ together with a choice of a lightlike normal vector field $\xi$ satisfying suitable geometric conditions, and assuming the intrinsic–extrinsic compatibility Condition $(\ref{cond4})$, the pullback bundle $\Psi^*(T\widetilde{M})\to M$, endowed with the induced metric $\widetilde g$ and connection $\widetilde\nabla$, naturally acquires the structure of the normal conformal tractor bundle, \cite[Cor. 3.6]{MP03192} or Proposition \ref{261121A}. This extrinsic realization of tractor geometry is particularly appealing, since it allows one to reinterpret conformal problems in terms of submanifold theory. For example, it is well-known that the existence of parallel sections of the normal conformal tractor bundle plays a central role in the study of Einstein metrics and conformal holonomy (see \cite{G20262}, \cite{G2026} and references therein), and within this extrinsic framework, the equations characterizing the existence of such parallel sections can be translated into geometric conditions on the immersion, cf. Section \ref{22012025A}. 

However, the approach developed in \cite{MP03192} relied on the \emph{a priori} existence of codimension two spacelike immersions with prescribed properties, an assumption whose validity was not addressed there. The main purpose of the present paper is to remove this limitation and to show that the extrinsic picture described above is, at least locally, always available. More precisely, let $(M,c)$ be an $n$-dimensional Riemannian conformal structure. Fix a representative metric $g\in c$ and an admissible one-parameter family of $(1,1)$-tensors $\gamma$ on $M$ (see Section \ref{22012029687} for the definition of admissibility). Using these data, we construct a Lorentzian manifold 
$$
\widetilde{M} := \R_{>0}\times(-\delta,\delta)\times M
$$ 
with metric
$$
\widetilde{g} = d(rt)\otimes dt + dt \otimes d(rt) + t^2 \langle -,- \rangle_r^g, \qquad \langle V,W\rangle_r^g := g(\gamma(r)(V),W),
$$
where $(t,r)\in \R_{>0}\times(-\delta,\delta)$ and $V,W\in \mathfrak{X}(M)$. Note that the ambient metric $\widetilde{g}$ depends explicitly on $g$ and the family $\gamma$, and defines a Lorentzian ambient space in which $M$ can be realized as a spacelike submanifold. This family of Lorentzian manifolds was first introduced in \cite{MP03193}; a summary of their main properties is presented in Section \ref{22012029687}. For any smooth function $u\in C^\infty(M)$, we define the codimension two spacelike immersion
$$
\Psi^u \colon M \longrightarrow (\widetilde{M},\widetilde{g}), \qquad x\mapsto (e^{u(x)},0,x),
$$
whose induced metric satisfies $\Psi^{u^{*}}(\widetilde{g})=e^{2u}g.$ In other words, while $\widetilde{g}$ is fixed by $g$ and $\gamma$, the immersion $\Psi^u$ depends additionally on $u$, which scales the induced metric conformally. These immersions were also introduced in \cite{MP03193}, although with a different purpose than in the present work; their main characteristics are summarized in Section \ref{1234saderer}.

The main result, Theorem \ref{1501202439578}, shows that the immersions $\Psi^u$ recover the normal conformal tractor bundle, in the sense introduced above, when the Ricci tensor of $\widetilde{g}$ vanishes along the hypersurface $\{r=0\}\subset \widetilde{M}$. As an immediate consequence, Corollary \ref{1502948766} shows that  this condition determines the derivative of $\gamma$ at $r=0$ as follows: 
$$
g(\dot{\gamma}(0)(-),-) = 2P^g,
$$
where $P^g$ denotes the Schouten tensor of $g$, defined by 
$$
P^g := \frac{1}{n-2}\Big(\mathrm{Ric}^g - \frac{\mathrm{S}^g}{2(n-1)} g\Big),
$$
with $\mathrm{Ric}^g$ and $\mathrm{S}^g$ denoting the Ricci and scalar curvatures of the metric $g$, respectively. In this sense, the Schouten tensor naturally appears as the symmetric $(0,2)$-tensor governing the extrinsic realization of the normal conformal tractor bundle via $\Psi^u$, independently of the choice of the function $u$. Finally, Remark \ref{18012025A} shows that this condition on $\gamma$ can always be achieved, at least locally.

To conclude this work, in Section \ref{22012025A} we revisit the problem of the existence of parallel sections of the normal conformal tractor bundle. Using the extrinsic realization developed throughout this paper, we reformulate the equations characterizing parallel sections as a system expressed entirely in terms of the geometry of the spacelike immersion, Proposition \ref{230212029375}. We then show that this system coincides precisely with the classical equations for parallel sections written in a metric splitting, Proposition \ref{283475645893}. This establishes the complete equivalence between the intrinsic and extrinsic descriptions. Moreover, the extrinsic viewpoint offers additional geometric insight into the structure of parallel standard tractors and further highlights the role of codimension two immersions spacelike as a natural and effective geometric framework for the study of conformal geometry.

\section{Preliminaries on submanifold theory}

Unless stated otherwise, we assume $n\geq 2$. A smooth immersion  $\Psi:M\rightarrow (\widetilde{M}, \widetilde{g})$ of a (connected) $n$-dimensional  manifold $M$ in an $(n+2)$-dimensional Lorentzian manifold $\widetilde{M}$ is said to be spacelike if the induced metric $g:= \Psi^{*}(\widetilde{g})$ is Riemannian. Here, $\Psi^{*}(\widetilde{g})$ denotes the pullback of $\widetilde{g}$ by the immersion $\Psi$. Note that this Section could be written for the more general case of arbitrary codimension, but we decided to focus on the case that interests us, namely the study of codimension two spacelike immersions into Lorentzian signature ambients. Let $\overline{\mathfrak{X}}(M)$ denote the $C^{\infty}(M)-$module of vector fields along the spacelike immersion $\Psi$, that is, the space of smooth sections of the pullback bundle $\Psi^{*}(T\widetilde{M})\to M$. Every vector field $X \in \mathfrak{X}(\widetilde{M})$ provides, in a natural way,  the vector field $\left.X\right|_{\Psi}:= X \circ \Psi \in \overline{\mathfrak{X}}(M)$. The set of vector fields $\mathfrak{X}(M)$ may be seen as a $C^{\infty}(M)-$submodule of $\overline{\mathfrak{X}}(M)$ by means of 
$$
\mathfrak{X}(M) \to \overline{\mathfrak{X}}(M),\quad  V \mapsto T\Psi \cdot V,
$$
where $T\Psi$ denotes the differential map of the immersion $\Psi$ and $(T \Psi \cdot V)(x):=T_{x}\Psi \cdot V_{x}$ for all $x\in M$.  As usual, for $V\in \overline{\mathfrak{X}}(M)$, we have the decomposition
$$
V=T\Psi\cdot V^{\top}+V^{\bot},
$$
where $V^{\top}_{x}\in T_{x}M$ and $V^{\bot}_{x}\in (T_{x}\Psi\cdot T_{x}M)^{\bot}$ for all $x\in M$. 
We call $V^{\top}$ the tangent part of $V$ and $V^{\bot}$ the normal part of $V$. The $C^{\infty}(M)-$submodule of $\overline{\mathfrak{X}}(M)$ of all normal vector fields along $\Psi$ is denoted by $\mathfrak{X}^{\perp}(M)$. 
We explicitly write the immersions and differential maps when necessary. 

We denote by $\widetilde{\nabla}$ and $\nabla$ to the Levi-Civita connections of $\widetilde{g}$ and $g$, respectively. As usual, we also denote by $\widetilde{\nabla}$ the induced connection on $M$. 
The decomposition of the induced connection $\widetilde{\nabla}$, into tangent and normal parts, leads to the Gauss and Weingarten formulas of $\Psi$ as follows (see \cite[Chap. 4]{One83})
\begin{equation}\label{shape}
	\widetilde{\nabla}_V (T\Psi\cdot W)=T\Psi\cdot\nabla_V W + \mathrm{II}(V,W) \quad \quad \mathrm{and} \quad \quad \widetilde{\nabla}_V\zeta=- T\Psi\cdot A_{\zeta}V+\nabla^{\perp}_V\zeta,
\end{equation}
for every tangent vector fields $V,W\in\mathfrak{X}(M)$ and $\zeta\in\mathfrak{X}^{\perp}(M)$. Here, $\nabla^{\perp}$ denotes the normal connection on $M$, $\mathrm{II}$ the second fundamental form and $A_{\zeta}$ the Weingarten endomorphism (or shape operator) associated to $\zeta$. Every Weingarten endomorphism $A_{\zeta}$ is self-adjoint and the second fundamental form is symmetric. They are also related by the following formula
\begin{equation}\label{230321C}
	g\left(A_{\zeta}V,W\right)  = \widetilde{g} \left(\mathrm{II}(V,W), \zeta \right).
\end{equation}
The mean curvature vector field  is defined by
$\mathbf{H}=\frac{1}{n}\mathrm{trace}_{g}\mathrm{II}$ where $\mathrm{trace}_{g}$ denotes the trace with respect to the metric $g$.  

Now assume that there exists a global lightlike normal frame $\{\xi,\ell\}$ along $\Psi$. That is, $\xi$ and $\ell$ are two globally defined normal vector fields  along $\Psi$ satisfying $\widetilde g(\xi,\xi)=\widetilde g(\ell,\ell)=0$ and the normalization condition $\widetilde{g}(\xi,\ell)=-1$. In this case, the pullback bundle $\Psi^{*}(T\widetilde{M}) \to M$ admits a canonical vector bundle isomorphism
\begin{equation}\label{12012025gfdgfd}
\Psi^{*}(T\widetilde{M}) \simeq \underline{\mathbb{R}} \oplus TM \oplus \underline{\mathbb{R}}.
\end{equation}
Under this identification, the first trivial line bundle corresponds to $\mathrm{Span}(\xi)$, the middle summand is identified with the tangent bundle $TM$ via $T\Psi$, and the last trivial line bundle corresponds to $\mathrm{Span}(\ell)$. On the other hand, using Equation $(\ref{230321C})$, the second fundamental form of the immersion can be expressed, for all $V,W\in\mathfrak{X}(M)$, as
\begin{equation}\label{270221B}
	\mathrm{II}(V,W)=-g(A_{\ell}V,W)\,\xi-g(A_{\xi}V,W)\,\ell.
\end{equation}
Taking traces in this expression, we obtain for the mean curvature vector field that
\begin{equation}\label{H}
	\mathbf{H}=-\dfrac{1}{n}\bigg(\mathrm{trace}\,(A_{\ell})\xi+\mathrm{trace}\,(A_{\xi})\ell\bigg).
\end{equation}

\section{Conformal Tractor bundles and codimension two spacelike immersions}
This Section recalls the notions of the conformal tractor bundle and the tractor connection, following the definitions in \cite{Cap}. It also reviews the extrinsic viewpoint provided by spacelike immersions in Lorentzian geometry, as previously discussed in \cite{MP03192}.
\begin{definition}\label{12012025utygh} 
 Let $M$ be an $n$-dimensional manifold. Then:
\begin{itemize} 
	\item  A (Riemannian) conformal tractor bundle on a manifold $M$ is a rank $n+2$ real vector bundle $\mathcal{T}\to M$ endowed with a bundle metric $\mathbf{h}$ of Lorentzian signature and with a distinguished oriented lightlike line subbundle $\mathcal{T}^{1}\subset \mathcal{T}$.

	\item  A tractor connection $\nabla^{\mathcal{T}}$ on a conformal tractor bundle $\mathcal{T}\to M$ is a linear connection such that 
	$\nabla^{\mathcal{T}} \mathbf{h}=0$ and the following map $\beta$ is an isomorphism of vector bundles over $M$:
	$$
	\begin{tikzpicture}
		\node (A) at (0,0) {$TM$};
		\node (B) at (4,0) {$\mathrm{Hom}\big(\mathcal{T}^{1},(\mathcal{T}^{1})^{\perp}/\mathcal{T}^{1}\big)$};
		\node (C) at (2,-1) {$M$};
		\node (D) at  (1.3,0.25) {$\beta$};
		\draw[->] (A) -- (B);
		\draw[->] (B) -- (C);
		\draw[<-] (C) -- (A);
	\end{tikzpicture}
	$$
	where $\beta$ is given by
	\begin{equation}\label{16082024}
		\beta(V_{x})\big(\xi\big)=\nabla^{\mathcal{T}}_{V_{x}}\sigma +\,\mathcal{T}^{1}_{x},
	\end{equation}
	with $x\in M$, $V_{x}\in T_{x}M$, $\xi\in \mathcal{T}^{1}_{x}$ and  $\sigma\in \Gamma(\mathcal{T}^{1})$ any section satisfying $\sigma(x)=\xi$. 
	\end{itemize}	   
\end{definition}
Let $(\mathcal{T},\mathcal{T}^1,\mathbf{h},\nabla^{\mathcal{T}})$ be a conformal tractor bundle on a manifold $M$ endowed with a tractor connection. From $(\ref{16082024})$, it follows that every nonvanishing local section $\sigma\in\Gamma(\mathcal{T}^{1})$ provides the vector bundle isomorphism
\begin{equation}\label{12012025}
	\beta_{\sigma}\colon TM\to \Big((\mathcal{T}^{1})^{\perp} /\mathcal{T}^{1}\Big) ,\quad V \mapsto \nabla^{\mathcal{T}}_{V}\sigma+ \mathcal{T}^{1}.
\end{equation}
Thus, every nonvanishing local section $\sigma \in \Gamma(\mathcal{T}^{1})$ produces a Riemannian metric $\mathbf{h}^{\sigma}$ on $M$ by means of the formula
\begin{equation}\label{12012026fdhfg}
	\mathbf{h}^{\sigma}(V,W):=\mathbf{h}(\beta_{\sigma}(V), \beta_{\sigma}(W)),
\end{equation}
for $V, W \in \mathfrak{X}(M)$. 
Any other nonvanishing local section $\bar{\sigma}\in\Gamma(\mathcal{T}^1)$ can be written as $\bar{\sigma}=f \sigma$ for some nonvanishing smooth function $f$ on $M$. Therefore, different choices of the section $\sigma$ induce conformally related metrics on $M$ and then, a conformal class $c$ on $M$. If we start with a Riemannian conformal structure $(M,c)$ and the induced conformal structure on $M$ by means of $(\mathcal{T},\mathcal{T}^{1}, \mathbf{h}, \nabla^{\mathcal{T}})$ agrees with $c$, we say that $(\mathcal{T}, \mathcal{T}^{1}, \mathbf{h}, \nabla^{\mathcal{T}})$ is a standard conformal tractor bundle for the fixed Riemannian conformal structure. Furthermore, there are certain normalization conditions that ensure the uniqueness of the tractor connection for $n\geq 3$; for more details see \cite[\text{Sec. 2.2}]{Cap}. The unique tractor connection satisfying these normalization conditions is called the normal tractor connection. The resulting structure $(\mathcal{T}, \mathcal{T}^{1}, \mathbf{h}, \nabla^{\mathcal{T}})$ is uniquely determined by the underlying conformal structure, up to isomorphism, and is referred to as the normal conformal tractor bundle.

\begin{remark}\label{12012025ythbggfd}
{\rm Consider the Riemannian conformal structure $c$ induced on $M$ from the data $(\mathcal{T},\mathcal{T}^{1}, \mathbf{h}, \nabla^{\mathcal{T}})$. Every choice of a metric $g\in c$ provides us with a decomposition of $\mathcal{T}$ as follows.
	The metric $g\in c$ is determined by an oriented section $\sigma \in \Gamma(\mathcal{T}^{1})$ by the condition $g=\mathbf{h}^{\sigma}$, where $\mathbf{h}^{\sigma}$ is given by $(\ref{12012026fdhfg})$. 
	Then, we obtain the $g$-decomposition
	$$
	\mathcal{T} \overset{g}{\simeq } \underline{\R}\oplus TM\oplus\underline{\R},
	$$
	where $\underline{\R}$ denotes the trivial bundle $M\times\R\rightarrow M$. The first trivial bundle $M\times\R\rightarrow M$ arises from the trivialization of $\mathcal{T}^{1}$ deduced from $\sigma$. The copy of $TM$ is given by means of 
	$$
	F\colon TM\to \mathcal{T} ,\quad V_{x} \mapsto \nabla^{\mathcal{T}}_{V_{x}}\sigma.
	$$
	for every $x\in M$. The second trivial bundle $M\times\R\rightarrow M$ comes from the unique lightlike section $\delta \in \Gamma(\mathcal{T})$ such that
	$\mathbf{h}(\sigma, \delta)=-1$ and $\mathbf{h}(\delta, F(TM))=0$.} 
\end{remark}
We now turn to the setting of spacelike codimension two immersions into Lorentzian manifolds. In this context, \cite{MP03192} addresses the problem of determining when such an immersion allows one to recover the normal conformal tractor bundle as a pullback bundle naturally associated with the immersion. Although the results therein are formulated in a more general framework, in this paper we restrict our attention to the following situation.

Let $M$ be an $n$-dimensional manifold and $(\widetilde{M},\widetilde{g})$ an $(n+2)$-dimensional Lorentzian manifold. Consider a spacelike immersion $\Psi \colon M \to (\widetilde{M},\widetilde{g})$ with induced metric $g$. Assume that along $\Psi$ there exists a global lightlike normal frame $\{\xi,\ell\}$ such that the Weingarten endomorphism associated with $\xi$ satisfies $A_\xi = -\mathrm{Id},$ and that $\xi$ is parallel with respect to the normal connection, that is, $\nabla^{\perp}\xi = 0.$ Under these assumptions, there is a natural choice of standard conformal tractor bundle for the Riemannian conformal structure $(M,[g])$, as described in \cite[Prop. 3.1]{MP03192}. It is given by the quadruple
$$
\bigl(\Psi^{*}(T\widetilde{M}), \mathrm{Span}(\xi), \widetilde{g}, \widetilde{\nabla}\bigr),
$$
where $\Psi^{*}(T\widetilde{M}) \to M$ denotes the pullback of the tangent bundle of $\widetilde{M}$, the distinguished lightlike line subbundle $\mathcal{T}^{1}$ is given by $\mathrm{Span}(\xi)$, the tractor metric $\mathbf{h}$ is the Lorentzian bundle metric induced by $\widetilde{g}$, and the tractor connection is the induced connection $\widetilde{\nabla}$. We refer to $\bigl(\Psi^{*}(T\widetilde{M}), \mathrm{Span}(\xi), \widetilde{g}, \widetilde{\nabla}\bigr)$ as the tractor bundle associated with the pair $(\Psi, \xi)$.
\begin{remark}\label{20012025}
{\rm In the above setting, it is worth noting that the map defined in $(\ref{12012025})$ reduces to $\beta_{\xi}(V)= -\,T\Psi\cdot A_{\xi}(V) + \mathrm{Span}(\xi)= T\Psi\cdot V + \mathrm{Span}(\xi),$ and hence, by $(\ref{12012026fdhfg})$, it follows that $g=\mathbf{h}^{\xi}$. Taking into account Remark~\ref{12012025ythbggfd} and the Weingarten formula $(\ref{shape})$, since $\xi$ is parallel with respect to the normal connection, then the decomposition given in $(\ref{12012025gfdgfd})$ coincides with the $g$-decomposition of the tractor bundle. This reflects the fact that, within the present extrinsic construction, the distinguished lightlike line subbundle $\mathcal{T}^{1}$ is identified with $\mathrm{Span}(\xi)$, so that a global section of $\mathcal{T}^{1}$ is fixed. Such a choice is stronger than the data of a general conformal tractor bundle and is equivalent to fixing a representative metric $g$ in the conformal class $[g]$. Consequently, in the situation described above, this extrinsic viewpoint naturally yields the $g$-decomposition of the tractor bundle.
}
\end{remark}

We conclude this Section by recalling a characterization of when the induced connection coincides with the normal tractor connection. More precisely, we state a particular case of \cite[Cor. 3.6]{MP03192} adapted to the present setting.

\begin{proposition}\label{261121A}
Let $\bigl(\Psi^{*}(T\widetilde{M}), \mathrm{Span}(\xi), \widetilde{g}, \widetilde{\nabla}\bigr)$ be the tractor bundle associated with the pair $(\Psi, \xi)$. Then $\bigl(\Psi^{*}(T\widetilde{M}), \mathrm{Span}(\xi), \widetilde{g}, \widetilde{\nabla}\bigr)$ is the normal conformal tractor bundle if and only if the Ricci tensor of $g$ satisfies
\begin{equation}\label{cond4}
\mathrm{Ric}^{g}(V, W)
=\frac{n}{2}\|\mathbf{H}\|^{2} g(V,W)
-(n-2)\,\widetilde{g}(\mathbf{H}, \xi)\, g\bigl(V, A_{\ell} W\bigr),
\end{equation}
for every $V,W\in \mathfrak{X}(M)$.
\end{proposition}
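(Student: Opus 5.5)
Under the identification $(\ref{12012025gfdgfd})$, which by Remark~\ref{20012025} coincides with the $g$-decomposition of the tractor bundle, I will write the induced connection $\widetilde\nabla$ as a block operator acting on sections $a\xi + T\Psi\cdot Y + b\ell$. I will then compare it with the well-known formula for the normal tractor connection in a metric splitting (see \cite{Cap}):
$$
\nabla^{\mathcal T}_V\begin{pmatrix} a\\ Y\\ b\end{pmatrix}=\begin{pmatrix} V(a)+P^g(V,Y)\\ \nabla_V Y + aV + b\,P^{g\sharp}(V)\\ V(b) - g(V,Y)\end{pmatrix}
$$
(up to the sign and ordering conventions of $\mathcal{T}^1$ placement). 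A tractor connection differs from the normal one only through the tensor occupying the $P$-slots. The normalization condition then forces that tensor to equal the Schouten tensor.

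\textbf{Step 1: compute $\widetilde\nabla$ in the frame.} From the Gauss and Weingarten formulas $(\ref{shape})$, together with $(\ref{270221B})$, $A_\xi=-\mathrm{Id}$ and $\nabla^\perp\xi=0$, I get
$$
\widetilde\nabla_V\xi=T\Psi\cdot V,\qquad \widetilde\nabla_V(T\Psi\cdot Y)=T\Psi\cdot\nabla_VY-g(A_\ell V,Y)\xi+g(V,Y)\ell .
$$
Next, $\nabla^\perp\xi=0$ and the normalization $\widetilde g(\xi,\ell)=-1$ give $\widetilde g(\nabla^\perp_V\ell,\xi)=0$. Since $\widetilde g(\nabla^\perp_V\ell,\ell)=0$ as well, this forces $\nabla^\perp\ell=0$, and hence $\widetilde\nabla_V\ell=-T\Psi\cdot A_\ell V$. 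So $\widetilde\nabla$ has exactly the block form above, with $P^g$ replaced by $-g(A_\ell-,-)$ up to sign convention. Metricity is automatic, and $\beta$ is an isomorphism by Remark~\ref{20012025}.

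\textbf{Step 2: identify normality.} Every tractor connection of this block shape, with an arbitrary symmetric tensor $B$ in the $P$-slots, is a tractor connection for $[g]$. Its curvature has the Weyl-type component $R^g_{VW}+(B\wedge g)$-terms and the Cotton-type component $d^\nabla B$. The normalization (Cartan's condition that the Ricci-type trace of the curvature vanishes) kills the trace $\mathrm{Ric}^g-(n-2)B-\mathrm{tr}(B)g$. For $n\ge3$ this has the unique solution $B=P^g$. Hence the tractor bundle of $(\Psi,\xi)$ is normal if and only if $g(A_\ell V,W)=P^g(V,W)$, with the sign fixed by the conventions in Step 1.

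\textbf{Step 3: rewrite as $(\ref{cond4})$.} By $(\ref{H})$ with $\mathrm{trace}A_\xi=-n$, we have $\mathbf H=\frac1n(-\mathrm{trace}(A_\ell)\xi+n\ell)$. This gives $\widetilde g(\mathbf H,\xi)=-1$ and $\|\mathbf H\|^2=\frac{2}{n}\mathrm{trace}(A_\ell)$. The condition $(\ref{cond4})$ therefore reads $\mathrm{Ric}^g=\mathrm{trace}(A_\ell)\,g+(n-2)g(A_\ell-,-)$, which is exactly the trace condition from Step 2 with $B=g(A_\ell-,-)$. Conversely, assuming $(\ref{cond4})$ and taking the trace gives $\mathrm{S}^g=(2n-2)\mathrm{trace}(A_\ell)$. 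Solving, I get $g(A_\ell-,-)=\frac1{n-2}\bigl(\mathrm{Ric}^g-\frac{\mathrm S^g}{2(n-1)}g\bigr)=P^g$, which is equivalent to normality.

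\textbf{Main obstacle.} The hardest part will be keeping the sign and ordering conventions of \cite{Cap} consistent with the frame $\{\xi,\ell\}$ with $\widetilde g(\xi,\ell)=-1$. I also need to justify that normality amounts precisely to the Ricci-trace condition on $B$. For that I would quote the uniqueness in \cite[Sec. 2.2]{Cap}: the explicit connection with $B=P^g$ is normal, and any two tractor connections of this shape differ by a $B$-term that the normalization detects.
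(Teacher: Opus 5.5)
Your proof is correct, but it takes a genuinely different route from the paper. The paper gives no argument for this proposition: it imports it as a special case of \cite[Cor.~3.6]{MP03192}, a result stated in a more general framework.

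You instead prove it directly. In the frame $\{\xi,T\Psi\cdot TM,\ell\}$ your Step~1 gives
\[
\widetilde\nabla_V(a\xi+T\Psi\cdot Y+b\ell)=\bigl(V(a)-g(A_\ell V,Y)\bigr)\xi+T\Psi\cdot\bigl(\nabla_VY+aV-bA_\ell V\bigr)+\bigl(V(b)+g(V,Y)\bigr)\ell .
\]
This is a tractor connection in the $g$-splitting, with the symmetric tensor $B=g(A_\ell-,-)$ in the Schouten slots. Its curvature annihilates $\xi$ automatically, because $\mathrm{II}$ is symmetric. So, with the normalization of \cite[Sec.~2.2]{Cap}, normality reduces to the vanishing of the Ricci-type contraction
\[
\mathrm{Ric}^g-(n-2)B-\mathrm{trace}(B)\,g .
\]
I checked this contraction; it also follows from the Gauss equation $R^g(V,W)Y-A_{\mathrm{II}(W,Y)}V+A_{\mathrm{II}(V,Y)}W$. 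Using $\widetilde g(\mathbf H,\xi)=-1$ and $\|\mathbf H\|^2=\frac2n\,\mathrm{trace}(A_\ell)$, this vanishing is literally $(\ref{cond4})$.

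What your route buys is transparency. It exhibits $(\ref{cond4})$ as nothing more than the normalization condition written extrinsically. For $n\geq 3$ it also gives the sharper equivalent form $g(A_\ell-,-)=P^g$; for $n=2$ only the trace condition survives, consistent with Remark~\ref{04022026}. Combined with $(\ref{140102025jfhd})$ and the conformal law
\[
P^{e^{2u}g}(V,W)=P^g(V,W)-g(\nabla_V\nabla u,W)+V(u)W(u)-\tfrac12\|\nabla u\|^2g(V,W),
\]
it makes Theorem~\ref{1501202439578} and Corollary~\ref{1502948766} immediate, with no need to pass through $\mathrm{Ric}^{e^{2u}g}$ and $(\ref{220721A})$. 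The citation, in turn, buys generality beyond the configuration assumed here.

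In Step~2, rely on the curvature contraction rather than your fallback via uniqueness. Uniqueness of the normal tractor bundle is only up to isomorphism. That route would need the extra check that an isomorphism preserving $\mathrm{Span}(\xi)$ and $\widetilde g$, and inducing the same metric $g$, fixes $\xi$, $T\Psi\cdot TM$ and $\ell$.

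One correction to your setup. The splitting formula you display is not consistent as written: it fails to be metric whether the first and last slots pair to $+1$ or to $-1$. In the standard form, with pairing $+1$, the first slot reads $V(a)-P^g(V,Y)$. Your Step~1 computation itself is right. After replacing $\ell$ by $-\ell$, it matches the standard form with $P^g$ replaced by $+g(A_\ell-,-)$, not $-g(A_\ell-,-)$. That is the sign you correctly use in Step~3.
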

\begin{remark}\label{04022026}
{\rm Although the condition given in Proposition \ref{261121A} makes sense for two dimensional Riemannian conformal structures, in this case it does not provide uniqueness for the tractor connection.}
\end{remark}

\section{A family of Lorentzian manifolds associated with a conformal structure}\label{22012029687}

We now introduce a family of Lorentzian manifolds that will serve as target spaces for the class of immersions studied in this work. Since this family was previously introduced in \cite{MP03193}, we restrict ourselves to recalling the aspects that are essential for the developments that follow.

Let $(M,c)$ be an $n$-dimensional Riemannian conformal structure. A smooth $1$-parameter family $\gamma\colon \R \to  \mathcal{T}_{(1,1)}M$ is said to be  admissible if it satisfies the following conditions:
	\begin{enumerate}
		\item For every $r\in\mathbb{R}$, the tensor field $\gamma(r)$ is self-adjoint  with respect to any metric $g\in c$.
		\item   $\gamma(0)=\mathrm{Id}$.
		\item There exists $\delta >0$ such that $\gamma(r)$ is not singular for all $|r|< \delta$.
	\end{enumerate}
\noindent The smoothness of $\gamma$ is understood in the sense that for every vector field $V\in \mathfrak{X}(M)$ and every point $x\in M$, the derivative
$$
\dot{\gamma}(r)(V_{x})=\lim_{\varepsilon\to 0}\frac{\gamma(r+\varepsilon)(V_{x})- \gamma(r)(V_{x})}{\varepsilon}\in T_{x}M
$$
exists. In particular, the tensor $\dot{\gamma}(0)\in \mathcal{T}_{(1,1)}M$ is well-defined. Let us note that the condition $3$ in the above definition can be deleted when $M$ is compact and, at least locally, $\delta$ always exists in the general case.

Let us fix a metric $g\in c$ and an admissible family $\gamma$. For each $r \in \R$, we define a symmetric tensor on $M$ by
\[
\langle V,W\rangle_{r}^{g}=g\Big(\gamma(r)(V), W\Big).
\]
Clearly, $\langle \,,\,\rangle_{0}^{g}=g$,  so this construction yields a smooth $1$-parameter deformation of the metric $g$. Moreover, by admissibility, the tensors
$\langle \,,\,\rangle_{r}^{g}$ are positive definite for $|r|< \delta$. We now consider the manifold 
\begin{equation}\label{14012025iruf}
\widetilde{M}:= B\times M, \quad B:=\R_{>0}\times (-\delta, +\delta),
\end{equation}
with coordinates $(t, r)$ on $B$, endowed with the Lorentzian metric
\begin{equation}\label{14012025gfjjgf}
	\widetilde{g}= d(r t)\otimes dt+ dt \otimes d(r t)+ t^2\langle -,- \rangle_{r}^{g}.
\end{equation}
We denote by $ \mathfrak{L}(M)$ the space of vector fields on $\mathfrak{X}(\widetilde{M})$ obtained as natural lifts of vector fields on $M$. By a slight abuse of notation, we use the same notation for a vector field $V \in \mathfrak{X}(M)$ and for its lift to $\mathfrak{X}(\widetilde{M})$. 

The Levi-Civita connection $\widetilde{\nabla}$ of $(\widetilde{M}, \widetilde{g})$ satisfies
	$$
		\widetilde{\nabla}_{\partial_{t}} \partial_{t}=\widetilde{\nabla}_{\partial_{r}} \partial_{r}=0, \quad \quad \widetilde{\nabla}_{\partial_{t}}\partial_{ r}=\widetilde{\nabla}_{\partial_{r}}\partial_{t}=\dfrac{1}{t}\partial_{r},
	$$
	$$	
		\widetilde{\nabla}_{V}\partial_{t}=\dfrac{1}{t}V,\quad \widetilde{\nabla}_{V}\partial_{r}=\frac{1}{2}\gamma(r)^{-1}(\dot{\gamma}(r)(V)),
	$$	
	$$
		\widetilde{\nabla}_{V}W|_{r=0}=-\frac{1}{2t}\widetilde{g}(\dot{\gamma}(0)(V), W)\partial_{t}-\frac{1}{t^2}\widetilde{g}(V,W)\partial_{r}+ \nabla_{V}W,
	$$	
	where $V, W\in \mathfrak{L}(M)$ and $\nabla$ denotes the Levi-Civita connection of  $(M,g)$, \cite[Prop. 3.6]{MP03193}. Moreover, the Ricci tensor $\widetilde{\mathrm{Ric}}$ of  $(\widetilde{M}, \widetilde{g})$ restricted to $r=0$ satisfies
$$
	\widetilde{\mathrm{Ric}}|_{r=0}(\partial_{t},\partial_{t})=\widetilde{\mathrm{Ric}}|_{r=0}(\partial_{t},V)=0, \quad V\in\mathcal{L}(M),
$$
and
\begin{equation}\label{220721A}
		\widetilde{\mathrm{Ric}}|_{r=0}(V,W)=\mathrm{Ric}^g(V,W)-\dfrac{\mathrm{trace}(\dot{\gamma}(0))}{2}g(V,W)-\left(\dfrac{n-2}{2}\right)g\left(\dot{\gamma}(0)(V),W\right),
\end{equation}
where $V, W\in \mathfrak{L}(M)$ and $\mathrm{Ric}^g$ denotes the Ricci tensor of $(M,g)$, \cite[Cor. 3.9]{MP03193}.  As a consequence, the vanishing of the Ricci tensor along $r=0$ can be characterized as follows:
	\begin{itemize}
		\item For $n=2$, it satisfies that
		$\widetilde{\mathrm{Ric}}|_{r=0}(V,W)= 0$ if and only if $$\mathrm{trace}(\dot{\gamma}(0))=2K^g,$$  where $K^g$ is the Gauss curvature of  $g$.
		\item For $n\geq 3$, it satisfies that
		$\widetilde{\mathrm{Ric}}|_{r=0}(V,W)= 0$  if and only if $$g(\dot{\gamma}(0)(-), -)=2P^g,$$  where $P^g$ is the Schouten tensor of $g$.
	\end{itemize}
\begin{remark}
	{\rm As pointed out in \cite[Prop. 3.3]{MP03193}, this family of Lorentzian manifolds gives rise to pre-ambient spaces for $(M,c)$ in the sense introduced in \cite{Cap}. Although this perspective will not be required in what follows, we consider it worth mentioning in view of the inherent difficulty of constructing such spaces.}
\end{remark}

\section{Realization of the normal conformal tractor bundle via a class of codimension two spacelike immersions}\label{1234saderer}

Let $(M,c)$ be an $n$-dimensional Riemannian conformal structure. Throughout this Section, we fix a metric $g\in c$ and an admissible family $\gamma$. Associated with these data, we consider the Lorentzian manifold $(\widetilde{M},\widetilde{g})$ constructed in \eqref{14012025iruf} and $(\ref{14012025gfjjgf})$.

For any function $u \in C^\infty(M)$, we define the map
\begin{equation}\label{immer}
	\Psi^{u}\colon M \to (\widetilde{M},\widetilde{g}), \qquad 
	x \mapsto \big(e^{u(x)},0,x\big),
\end{equation}
which yields a spacelike immersion satisfying $\Psi^{u^{*}}(\widetilde{g})=e^{2u}g.$ The differential map of $\Psi^{u}$ is given by
\begin{equation}\label{220221C}
	T\Psi^{u}(V)=e^{u}V(u)\,\partial_{t}\big|_{\Psi^{u}}+V\big|_{\Psi^{u}},
\end{equation}
for every $V\in\mathfrak{X}(M)$. A straightforward computation based on \eqref{220221C} shows that the vector fields
$$
	\xi^{u}=e^{u}\partial_{t}\big|_{\Psi^{u}}\quad \mathrm{and} \quad\ell^{u}=e^{-u}\frac{\|\nabla u\|^{2}}{2}\partial_{t}\big|_{\Psi^{u}}-e^{-2u}\partial_{r}\big|_{\Psi^{u}}+e^{-2u}\nabla u\big|_{\Psi^{u}}
$$
span the normal bundle of $\Psi^{u}$, where $\nabla u$ denotes the gradient of the function $u$ with respect to the metric $g$, and $\|\cdot\|$ denotes the norm induced by $g$. Moreover, one readily verifies that $\{\xi^{u},\ell^{u}\}$ defines a global lightlike normal frame. 

Let $A_{\xi^{u}}$ and $A_{\ell^{u}}$ denote the Weingarten endomorphisms corresponding to the lightlike normal vector fields $\xi^{u}$ and $\ell^{u}$, respectively. Then
\begin{equation}\label{140102025jfhd}
A_{\xi^{u}}=-\mathrm{Id}\quad \mathrm{and} \quad A_{\ell^{u}}=e^{-2u}\left[\frac{\dot{\gamma}(0)-\|\nabla u\|^{2}\,\mathrm{Id}}{2}+g(\nabla u,\mathrm{Id})\,\nabla u-\nabla\nabla u\right],
\end{equation}
where $\nabla\nabla u$ denotes the covariant derivative of the gradient of $u$ with respect to the Levi-Civita connection $\nabla$ of $(M,g)$, that is, $\nabla\nabla u(V):=\nabla_{V}\nabla u$ for all $V\in\mathfrak{X}(M)$, see details in \cite[Prop. 4.2]{{MP03193}}. Furthermore, both $\xi^{u}$ and $\ell^{u}$ are parallel with respect to the normal connection, \cite[Cor. 4.3]{{MP03193}}. Using Equation \eqref{270221B}, the second fundamental form $\mathrm{II}^{u}$ of the immersion $\Psi^{u}$ can be expressed as
\begin{equation}\label{II2345}
\mathrm{II}^{u}(V,W)=-g\Big(\frac{\dot{\gamma}(0)(V)-\|\nabla u\|^{2}V}{2}+V(u)\nabla u-\nabla_{V}\nabla u,W\Big)\,\xi^{u}+e^{2u}g(V,W)\,\ell^{u},
\end{equation}
for all vector fields $V,W\in\mathfrak{X}(M)$. In particular, from Equation \eqref{H} we obtain the mean curvature vector field
\begin{equation}\label{020321A}
\mathbf{H}^{u}=\frac{e^{-2u}}{n}\left(\Delta u-\frac{\mathrm{trace}(\dot{\gamma}(0))-(n-2)\|\nabla u\|^{2}}{2}\right)\xi^{u}+\ell^{u},
\end{equation}
where $\Delta$ denotes the Laplace operator associated with the metric $g$.

At this point, all the required framework is in place to state the main theorem of the paper on immersions recovering the normal conformal tractor bundle. 
\begin{theorem}\label{1501202439578}
Let $\bigl(\Psi^{u^{*}}(T\widetilde{M}), \mathrm{Span}(\xi^u), \widetilde{g}^u, \widetilde{\nabla}^u\bigr)$ be the tractor bundle associated with the pair $(\Psi^u, \xi^u)$, where $\widetilde{g}^u$ denotes the restriction of the metric $\widetilde{g}$ along the immersion $\Psi^u$, and $\widetilde{\nabla}^u$ is the induced connection along $\Psi^u$. Then $\bigl(\Psi^{u^{*}}(T\widetilde{M}), \mathrm{Span}(\xi^u), \widetilde{g}^u, \widetilde{\nabla}^u\bigr)$ is the normal conformal tractor bundle if and only if $\widetilde{\mathrm{Ric}}|_{r=0}(V,W)= 0$ for every $V, W\in \mathfrak{L}(M)$.
\end{theorem}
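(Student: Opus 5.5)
The plan is to apply Proposition~\ref{261121A} directly to the pair $(\Psi^u,\xi^u)$ and to show that condition $(\ref{cond4})$ for $\Psi^u$ is, term by term, the same as the vanishing of $(\ref{220721A})$.

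\textbf{Step 1: Proposition~\ref{261121A} applies.} The hypotheses hold with induced metric $g_u:=\Psi^{u^{*}}(\widetilde g)=e^{2u}g$. Indeed, $\{\xi^u,\ell^u\}$ is a global lightlike normal frame, $A_{\xi^u}=-\mathrm{Id}$ by $(\ref{140102025jfhd})$, and $\nabla^\perp\xi^u=0$. Hence the quadruple in the statement is a standard tractor bundle for $(M,[g])$. It is the normal one if and only if
\[
\mathrm{Ric}^{g_u}(V,W)=\tfrac n2\|\mathbf H^u\|^2 g_u(V,W)-(n-2)\,\widetilde g(\mathbf H^u,\xi^u)\,g_u(V,A_{\ell^u}W).
\]

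\textbf{Step 2: compute the right-hand side.} By $(\ref{020321A})$ we have $\mathbf H^u=a\,\xi^u+\ell^u$, where
\[
a=\tfrac{e^{-2u}}{n}\Bigl(\Delta u-\tfrac{\mathrm{trace}(\dot\gamma(0))-(n-2)\|\nabla u\|^2}{2}\Bigr).
\]
Since $\widetilde g(\xi^u,\ell^u)=-1$ and both fields are null, this gives $\|\mathbf H^u\|^2=-2a$ and $\widetilde g(\mathbf H^u,\xi^u)=-1$. Substituting these, $g_u=e^{2u}g$, and $A_{\ell^u}$ from $(\ref{140102025jfhd})$, the factors $e^{\pm2u}$ cancel. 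The right-hand side becomes
\[
\tfrac{\mathrm{trace}(\dot\gamma(0))}{2}g+\tfrac{n-2}{2}g(\dot\gamma(0)\cdot,\cdot)
-(n-2)\bigl(\nabla\nabla u-du\otimes du\bigr)-\bigl(\Delta u+(n-2)\|\nabla u\|^2\bigr)g .
\]
Here $\nabla\nabla u$ is read as the Hessian $(0,2)$-tensor of $u$.

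\textbf{Step 3: compare with the conformal change of Ricci.} Use the classical formula
\[
\mathrm{Ric}^{e^{2u}g}=\mathrm{Ric}^g-(n-2)\bigl(\nabla\nabla u-du\otimes du\bigr)-\bigl(\Delta u+(n-2)\|\nabla u\|^2\bigr)g .
\]
Its $u$-dependent terms are exactly the $u$-dependent terms found in Step 2, so they cancel. Condition $(\ref{cond4})$ therefore reduces to
\[
\mathrm{Ric}^g-\tfrac{\mathrm{trace}(\dot\gamma(0))}{2}g-\tfrac{n-2}{2}g(\dot\gamma(0)\cdot,\cdot)=0 .
\]
By $(\ref{220721A})$, this is precisely $\widetilde{\mathrm{Ric}}|_{r=0}(V,W)=0$ for all $V,W\in\mathfrak L(M)$. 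This closes the chain of equivalences, and in particular the condition does not depend on $u$.

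\textbf{Remaining points.} One subtlety is that $(\ref{cond4})$ is only evaluated at the image points $(e^{u(x)},0,x)$, whereas the theorem speaks of all of $\{r=0\}$. This is harmless, since the right side of $(\ref{220721A})$ depends only on $x$ and not on $t$. The main obstacle is purely computational: keeping the sign conventions consistent in Step 2. These are the sign of $\Delta$, the normalization $\widetilde g(\xi^u,\ell^u)=-1$, and the sign inside $A_{\ell^u}$. The computation is only correct if the $\|\nabla u\|^2$ terms from $\|\mathbf H^u\|^2$ and from $A_{\ell^u}$ combine to $-(n-2)\|\nabla u\|^2 g$, which is what makes the $u$-terms match the conformal transformation law exactly. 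I would double-check this by testing $u$ constant and then $u$ linear in flat space.
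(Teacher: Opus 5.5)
Your proposal is correct and follows essentially the same route as the paper. You reduce via Proposition~\ref{261121A} to Condition~$(\ref{cond4})$ for $e^{2u}g$, rewrite its right-hand side using $(\ref{140102025jfhd})$ and $(\ref{020321A})$, and cancel the $u$-dependent terms against the conformal transformation law of the Ricci tensor, which leaves exactly the vanishing of $(\ref{220721A})$. Your Step~2 expression agrees with the paper's rewritten form of $(\ref{cond4})$, since the two $-\tfrac{n-2}{2}\|\nabla u\|^2$ contributions do combine to $-(n-2)\|\nabla u\|^2$, so the sign check you were worried about goes through.
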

\begin{proof}
Proposition \ref{261121A} reduces the problem to verifying that the immersions $\Psi^u$ introduced in $(\ref{immer})$ satisfy Condition $(\ref{cond4})$ for the metric $e^{2u}g$ if and only if $\widetilde{\mathrm{Ric}}|_{r=0}(V,W)= 0$ for every $V, W\in \mathfrak{L}(M)$. Taking into account Equations $(\ref{140102025jfhd})$ and $(\ref{020321A})$, Condition $(\ref{cond4})$ can be rewritten as
\begin{align*}
   \mathrm{Ric}^{e^{2u}g}(V, W) & =\left(-\Delta u+\frac{\mathrm{trace}(\dot{\gamma}(0))}{2}-(n-2)\|\nabla u\|^2\right)g(V,W)\\
   & + \left(\dfrac{n-2}{2}\right)g\big(\dot{\gamma}(0)(V),W\big)+(n-2)g\big(\nabla u,V\big)g\big(\nabla u,W\big)-(n-2)g\big(\nabla_V\nabla u,W\big).
\end{align*}
On the other hand, it is well-known that the Ricci tensors of two conformally related metrics satisfy
$$
\mathrm{Ric}^{e^{2u}g}=\mathrm{Ric}^{g}-\Delta u\,g-(n-2)\|\nabla u\|^2 g+(n-2)g\big(\nabla u,-\big)g\big(\nabla u,-\big)-(n-2)g\big(\nabla_V\nabla u,W\big).
$$
Combining these expressions and using Equation~\eqref{220721A} completes the proof.
\end{proof}
\begin{corollary}\label{1502948766}
Let $\bigl(\Psi^{u^{*}}(T\widetilde{M}), \mathrm{Span}(\xi^u), \widetilde{g}^u, \widetilde{\nabla}^u\bigr)$ be the tractor bundle associated with the pair $(\Psi^u, \xi^u)$. Then $\bigl(\Psi^{u^{*}}(T\widetilde{M}), \mathrm{Span}(\xi^u), \widetilde{g}^u, \widetilde{\nabla}^u\bigr)$ is the normal conformal tractor bundle if and only if:
\begin{itemize}
		\item For $n=2$, it satisfies that $$\mathrm{trace}(\dot{\gamma}(0))=2K^g,$$  where $K^g$ is the Gauss curvature of  $g$.
		\item For $n\geq 3$, it satisfies that $$g(\dot{\gamma}(0)(-), -)=2P^g,$$  where $P^g$ is the Schouten tensor of $g$.
	\end{itemize}
\end{corollary}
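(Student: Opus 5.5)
The corollary follows by combining Theorem \ref{1501202439578} with the characterization of the vanishing of $\widetilde{\mathrm{Ric}}|_{r=0}$ on lifts given in Section \ref{22012029687}. By the theorem, the tractor bundle associated with $(\Psi^u,\xi^u)$ is the normal conformal tractor bundle if and only if $\widetilde{\mathrm{Ric}}|_{r=0}(V,W)=0$ for all $V,W\in\mathfrak{L}(M)$. So it only remains to justify that this vanishing is equivalent to the stated conditions on $\dot\gamma(0)$. I would do this directly from Equation \eqref{220721A}, so that the argument is self-contained.

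\emph{Case $n\geq 3$.} Set $T:=g(\dot\gamma(0)(-),-)$, which is symmetric by admissibility, and let $\mathrm{trace}\,T=\mathrm{trace}(\dot\gamma(0))$. By \eqref{220721A}, vanishing means
$$
\mathrm{Ric}^g=\tfrac{\mathrm{trace}\,T}{2}\,g+\tfrac{n-2}{2}\,T .
$$
Taking the $g$-trace gives $\mathrm{S}^g=\tfrac{n}{2}\mathrm{trace}\,T+\tfrac{n-2}{2}\mathrm{trace}\,T=(n-1)\,\mathrm{trace}\,T$. Hence $\mathrm{trace}\,T=\mathrm{S}^g/(n-1)$, and solving for $T$ gives
$$
T=\tfrac{2}{n-2}\Big(\mathrm{Ric}^g-\tfrac{\mathrm{S}^g}{2(n-1)}g\Big)=2P^g .
$$
For the converse, assume $T=2P^g$. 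Then $\mathrm{trace}\,T=\mathrm{S}^g/(n-1)$, and substituting back into the displayed equation reproduces $\mathrm{Ric}^g$, so $\widetilde{\mathrm{Ric}}|_{r=0}$ vanishes on lifts.

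\emph{Case $n=2$.} Here $\mathrm{Ric}^g=K^g g$ and the last term of \eqref{220721A} drops out. Vanishing is therefore equivalent to $K^g g=\tfrac{1}{2}\mathrm{trace}(\dot\gamma(0))\,g$, that is, $\mathrm{trace}(\dot\gamma(0))=2K^g$.

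The argument is routine algebra, and no step is a real obstacle. The only point needing care is the trace step, which requires $n\neq 2$ in order to divide by $n-2$; this is exactly why the two cases are separated. The logical content of the theorem itself is taken as given.
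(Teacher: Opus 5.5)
Your proposal is correct and follows the paper's route. Like the paper, you combine Theorem \ref{1501202439578} with the characterization of the vanishing of $\widetilde{\mathrm{Ric}}|_{r=0}$ obtained from Equation \eqref{220721A} in Section \ref{22012029687}; you simply write out the trace step ($\mathrm{trace}(\dot\gamma(0))=\mathrm{S}^g/(n-1)$ for $n\geq 3$) that the paper states without detail, and your algebra is correct in both directions and in the case $n=2$.
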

\begin{remark}
{\rm We recall from Remark \ref{04022026} that, in the case $n=2$, the normalization condition does not constitute a genuine normalization, as it does not ensure uniqueness of the tractor connection.}
\end{remark}
\begin{remark}
{\rm As shown in Theorem \ref{1501202439578}, the necessary condition for these immersions to recover the normal tractor connection does not depend on the immersion itself, but solely on a curvature condition of the Lorentzian metric $\widetilde{g}$ along $r=0$. In the language of pre-ambient spaces, this corresponds to requiring the Ricci tensor to vanish along the bundle of scales. Consequently, this result is consistent with \cite[Sec. 2.5]{Cap}, where the same curvature condition appears, although the construction of the normal tractor connection there is of a different nature, being obtained by descending the pre-ambient Levi-Civita connection via a quotient procedure.}
\end{remark}
\begin{remark}\label{18012025A}
{\rm The normalization condition given in Theorem \ref{1501202439578}, or equivalently in Corollary \ref{1502948766}, can always be satisfied, at least locally. Indeed, for $n\geq 3$, consider the curve $\gamma(r) = \mathrm{Id} + 2r\widehat{P}^g,$ where $P^g(V,W) = g(\widehat{P}^g(V), W)$ for $V,W \in \mathfrak{X}(M)$. For any point $x \in M$, there exists an open neighbourhood $x \in \mathcal{O} \subset M$ such that $\gamma$ defines an admissible family on $\mathcal{T}_{(1,1)}\mathcal{O}$. Note that the specific choice $\gamma(r) = \mathrm{Id} + 2r\widehat{P}^g$ can be replaced by any curve satisfying $\gamma(0) = \mathrm{Id}$ and $\dot{\gamma}(0) = 2\widehat{P}^g$. This observation allows us to regard the normal conformal tractor bundle as the pullback bundle associated with a codimension two spacelike immersion.}
\end{remark}
\begin{remark} 
{\rm Assume that $\widetilde{\mathrm{Ric}}|_{r=0}= 0$. Then $\Psi^u$ satisfies Equation $(\ref{cond4})$. It follows from this equation that $(M,e^{2u}g)$ is Einstein if and only if the immersion $\Psi^u$ is totally umbilical and, in particular, the conformal class $c$ contains an Einstein metric if and only if there exists some $u\in\mathcal{C}^{\infty}(M)$ such that $\Psi^u$ is totally umbilical. To prove the first equivalence, it suffices to note from $(\ref{cond4})$ that $\operatorname{Ric}^{e^{2u}g}=he^{2u}g$ if and only if $A_{\ell^u}=\mu\mathrm{Id},$ for some smooth functions $h,\mu\in\mathcal{C}^{\infty}(M)$. Equation $(\ref{H})$ then implies that necessarily $$ \mu=\widetilde{g}(\mathbf{H}^u,\ell^u), \qquad \|\mathbf{H}^u\|^{2}=-2\,\widetilde{g}(\mathbf{H}^u,\xi^u)\,\widetilde{g}(\mathbf{H}^u,\ell^u), $$ and the equivalence follows directly from these identities. Moreover, in this case the Ricci tensor satisfies $$\mathrm{Ric}^{e^{2u}g}=(n-1)\|\mathbf{H^u}\|^{2}\,e^{2u}g$$ and, by Exercise $21$ in \cite[p. 96 ]{One83}, for $n\geq 3$ the function $\|\mathbf{H}^u\|^{2}$ is necessarily constant. This is consistent with the fact that, when $\operatorname{trace}(\dot{\gamma}(0)) = \frac{S^g}{n-1}$ (which holds when $\widetilde{\mathrm{Ric}}|_{r=0}= 0$), and using the transformation law for the scalar curvature under conformal changes, Formula $(\ref{020321A})$ reduces to $$ \mathbf{H}^{u} = -\frac{1}{2n(n-1)}\, S^{e^{2u}g}\, \xi^u + \ell^u, $$ and therefore $\|\mathbf{H}^u\|^2 = \frac{S^{e^{2u}g}}{n(n-1)}.$ It is worth noting that this last expression for $\mathbf{H}^{u}$ extends \cite[Cor. 4.5]{PPR} and \cite[Cor. 3.7]{PR13}.} 
\end{remark}

\section{Parallel sections of the normal conformal tractor bundle}\label{22012025A}

To conclude this work, this Section is devoted to a reformulation of the existence problem for parallel sections of the normal conformal tractor bundle, expressed in terms of the pullback bundle associated with a spacelike immersion from the family $(\ref{immer})$, in the case $n \geq 3$. In what follows, we fix a choice of admissible family $\gamma$ satisfying the normalization condition of Corollary \ref{1502948766}, so that the associated tractor bundle is indeed the normal conformal tractor bundle. Since the normal conformal tractor bundle is unique up to isomorphism, it follows that all normal conformal tractor bundles appearing in Theorem \ref{1501202439578} are mutually isomorphic. Therefore, without loss of generality, we may restrict our attention to the case $u = 0$.  That is, we shall work with the normal conformal tractor bundle associated with the pair $(\Psi^0, \xi^0)$, which, in order to simplify the notation, will be denoted simply by $\bigl(\Psi^{*}(T\widetilde{M}), \mathrm{Span}(\xi), \widetilde{g}, \widetilde{\nabla}\bigr),$ with the understanding that the superscript $0$ is omitted in all expressions from now on.

Since the normal tractor connection is precisely the induced connection, the study of the existence of parallel sections is equivalent to proving the existence of $W\in\overline{\mathfrak{X}}(M)$ such that
\begin{equation}\label{17012025hff}
\widetilde{\nabla}_V W=0,\, \text{ for every }V\in\mathfrak{X}(M).
\end{equation}
Observe that we can decompose $W$ as 
$$
W=T\Psi\cdot W^{\top}+W^{\bot},
$$
where $W^{\top}\in\mathfrak{X}(M)$ and $W^{\bot}\in\mathfrak{X}^{\perp}(M)$. Therefore, as a consequence of $(\ref{shape})$ we have that
$$
\widetilde{\nabla}_V W=T\Psi\cdot\nabla_V W^{\top} + \mathrm{II}(V,W^{\top})- T\Psi\cdot A_{W^{\bot}}V+\nabla^{\perp}_VW^{\bot}.
$$
Then, Equation $(\ref{17012025hff})$ admits the following equivalent formulation.
\begin{proposition}\label{230212029375}
   Let $\bigl(\Psi^{*}(T\widetilde{M}), \mathrm{Span}(\xi), \widetilde{g}, \widetilde{\nabla}\bigr)$ be the normal conformal tractor bundle associated with the pair $(\Psi,\xi).$ Then $W\in\overline{\mathfrak{X}}(M)$ is a parallel section of $\bigl(\Psi^{*}(T\widetilde{M}), \mathrm{Span}(\xi), \widetilde{g}, \widetilde{\nabla}\bigr)$ if and only if:
\begin{equation}\label{17012025utrhgf}
\left\{
\begin{aligned}
&\nabla_V W^{\top}=A_{W^{\bot}} V, \\
&\mathrm{II}(V, W^{\top}) + \nabla^{\perp}_V W^{\bot} = 0.
\end{aligned}
\right.
\end{equation}
   for every $V\in\mathfrak{X}(M)$. 
\end{proposition}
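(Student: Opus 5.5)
The plan is to project the expansion of $\widetilde{\nabla}_V W$ displayed just before the statement onto the tangent and normal bundles. Since the normal tractor connection coincides with the induced connection $\widetilde{\nabla}$ (the standing assumption of this section, coming from Theorem \ref{1501202439578} together with the normalization fixed via Corollary \ref{1502948766}), $W$ is parallel exactly when $(\ref{17012025hff})$ holds. So it is enough to show that $(\ref{17012025hff})$ is equivalent to the system $(\ref{17012025utrhgf})$.

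First I decompose $W=T\Psi\cdot W^{\top}+W^{\bot}$ and apply the Gauss formula to the tangent summand and the Weingarten formula to the normal summand, both from $(\ref{shape})$. This gives
$$
\widetilde{\nabla}_V W=T\Psi\cdot\big(\nabla_V W^{\top}-A_{W^{\bot}}V\big)+\big(\mathrm{II}(V,W^{\top})+\nabla^{\perp}_V W^{\bot}\big).
$$
Here the first bracket lies in $T\Psi\cdot TM$, while the second lies in the normal bundle: $\mathrm{II}$ takes normal values and $\nabla^{\perp}$ is the normal connection.

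Next I use that, because $\Psi$ is spacelike, each fibre splits as $T_{\Psi(x)}\widetilde{M}=T_x\Psi\cdot T_xM\oplus(T_x\Psi\cdot T_xM)^{\perp}$, a direct sum. Also, $T_x\Psi$ is injective because $\Psi$ is an immersion. Hence $\widetilde{\nabla}_V W=0$ holds if and only if both components vanish, that is, $\nabla_V W^{\top}=A_{W^{\bot}}V$ and $\mathrm{II}(V,W^{\top})+\nabla^{\perp}_V W^{\bot}=0$. Requiring this for every $V\in\mathfrak{X}(M)$ gives exactly $(\ref{17012025utrhgf})$, and the converse direction is immediate by substituting the system back into the displayed identity.

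There is no real obstacle. The only point needing care is justifying that the tangent/normal splitting is a direct sum, which holds because the induced metric is nondegenerate (indeed Riemannian). One must also check that the Weingarten formula applies to an arbitrary normal section $W^{\bot}$, and not just to $\xi$ and $\ell$; this holds because $(\ref{shape})$ is stated for every $\zeta\in\mathfrak{X}^{\perp}(M)$.
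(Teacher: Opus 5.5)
Your proposal is correct and follows essentially the same route as the paper. The paper also applies the Gauss and Weingarten formulas to $W=T\Psi\cdot W^{\top}+W^{\bot}$ and then separates the tangent and normal components. Your remarks on the direct-sum splitting and on the injectivity of $T\Psi$ just spell out the step the paper leaves implicit.
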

\begin{remark}
{\rm Suppose that $W^{\perp}$ is parallel with respect to $\nabla^{\perp}$. Then, by the second equation in \eqref{17012025utrhgf}, we obtain $\mathrm{II}(V, W^{\top})=0$ for every $V\in\mathfrak{X}(M)$. Hence, by Equation $(\ref{II2345})$, it follows that $W^{\top}=0$. Consequently, there are no non-trivial parallel sections with purely tangential component, that is, with $W^{\perp}=0$. While elementary, this observation exemplifies the effectiveness of the proposed framework in translating the search for parallel tractor sections into concrete geometric conditions.}
\end{remark}
As $W^{\bot}\in\mathfrak{X}^{\perp}(M)$ and $\{\xi,\ell\}$ form a global normal frame, $W^{\bot}$ admits the decomposition 
$$W^{\bot}=W_1\xi+W_2\ell,\quad W_1,W_2\in\mathcal{C}^{\infty}(M). 
$$ 
Using $(\ref{140102025jfhd})$, we obtain 
$$
A_{W^{\bot}}V=W_1A_{\xi}V+W_2A_{\ell}V=-W_1V+W_2\widehat{P}^{g}(V),
$$ 
where $\widehat{P}^{g}$ is defined in Remark \ref{18012025A}. Consequently, the first equation in $(\ref{17012025utrhgf})$ can be rewritten as 
$$
\nabla_V W^{\top}=-W_1V+W_2\widehat{P}^{g}(V). 
$$
Now, taking into account that $\xi$ and $\ell$ are parallel with respect to the normal connection, and using the formula $(\ref{II2345})$ for the second fundamental form, it is a direct computation to show that the second equation in $(\ref{17012025utrhgf})$ is equivalent to 
$$
\left(-P^g(V,W^{\top})+V(W_1)\right)\xi+\left(g(V,W^{\top})+V(W_2)\right)\ell=0,
$$
that is, to 
$$
\left\{
\begin{aligned}
&\widehat{P}^g(W^{\top})=\nabla W_1, \\
&W^{\top}=-\nabla W_2.
\end{aligned}
\right.
$$
We may therefore conclude that the problem of finding parallel sections of the normal conformal tractor bundle can be reformulated as follows.
\begin{proposition}\label{283475645893}
   Let $\bigl(\Psi^{*}(T\widetilde{M}), \mathrm{Span}(\xi), \widetilde{g}, \widetilde{\nabla}\bigr)$ be the normal conformal tractor bundle associated with the pair $(\Psi,\xi).$ Let $W\in\overline{\mathfrak{X}}(M)$ be written as $$
   W=T\Psi\cdot W^{\top}+W_1\xi+W_2\ell,
   $$
   with $W^{\top}\in\mathfrak{X}(M)$ and $W_1,W_2\in\mathcal{C}^{\infty}(M)$. Then $W$ is a parallel section of $\bigl(\Psi^{*}(T\widetilde{M}), \mathrm{Span}(\xi), \widetilde{g}, \widetilde{\nabla}\bigr)$ if and only if:
   \begin{equation}\label{sistecs} 
   \left\{ 
   \begin{aligned} 
   &\nabla_V W^{\top}=-W_1V+W_2\widehat{P}^{g}(V),\\ &\widehat{P}^g(W^{\top})=\nabla W_1, \\ 
   &W^{\top}=-\nabla W_2, \end{aligned} 
   \right. 
   \end{equation} 
   for every $V\in\mathfrak{X}(M)$. 
\end{proposition}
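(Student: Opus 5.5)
The plan is to start from Proposition \ref{230212029375}. By that result, $W$ is parallel if and only if the two equations in $(\ref{17012025utrhgf})$ hold for every $V\in\mathfrak{X}(M)$. So it suffices to rewrite each of them in terms of the components $W^{\top}$, $W_1$, $W_2$. Throughout we work with $u=0$ and with $\gamma$ normalized as in Corollary \ref{1502948766}, so that $g(\dot\gamma(0)(-),-)=2P^g$, i.e. $\dot\gamma(0)=2\widehat{P}^g$.

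\emph{Tangential equation.} Formula $(\ref{140102025jfhd})$ at $u=0$ gives $A_\xi=-\mathrm{Id}$ and $A_\ell=\tfrac12\dot\gamma(0)=\widehat{P}^g$, since all terms involving $\nabla u$ vanish. The Weingarten endomorphism is $C^\infty(M)$-linear in the normal argument, so
$$A_{W^\perp}V=W_1A_\xi V+W_2A_\ell V=-W_1V+W_2\widehat{P}^g(V).$$
Substituting this into the first equation of $(\ref{17012025utrhgf})$ yields exactly the first line of $(\ref{sistecs})$.

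\emph{Normal equation.} Since $\xi$ and $\ell$ are $\nabla^\perp$-parallel by \cite[Cor. 4.3]{MP03193}, we get $\nabla^\perp_V W^\perp=V(W_1)\,\xi+V(W_2)\,\ell$. From $(\ref{II2345})$ at $u=0$,
$$\mathrm{II}(V,W^{\top})=-P^g(V,W^{\top})\,\xi+g(V,W^{\top})\,\ell.$$
Because $\{\xi,\ell\}$ is a frame of the normal bundle, the second equation of $(\ref{17012025utrhgf})$ is equivalent to the vanishing of both coefficients, for all $V$:
$$V(W_1)=P^g(V,W^{\top})=g\big(V,\widehat{P}^g(W^{\top})\big),\qquad V(W_2)=-g(V,W^{\top}).$$
Here we used that $\widehat{P}^g$ is $g$-self-adjoint, which holds because $P^g$ is symmetric. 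Since $V(f)=g(V,\nabla f)$ and $g$ is nondegenerate, these two identities become $\nabla W_1=\widehat{P}^g(W^{\top})$ and $W^{\top}=-\nabla W_2$. These are the remaining two lines of $(\ref{sistecs})$.

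Each step is reversible, so the system $(\ref{sistecs})$ is equivalent to $(\ref{17012025utrhgf})$, and hence to $W$ being parallel. The only delicate point is specializing $(\ref{140102025jfhd})$ and $(\ref{II2345})$ correctly at $u=0$ with the normalization $\dot\gamma(0)=2\widehat{P}^g$. One must track the factor $\tfrac12$ so that $A_\ell=\widehat{P}^g$, and check that the $\xi$-coefficient of $\mathrm{II}$ is exactly $-P^g$. Everything else is linear algebra in the frame.
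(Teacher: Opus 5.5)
Your proposal is correct and follows essentially the same route as the paper: reduce to Proposition~\ref{230212029375}, expand $A_{W^{\perp}}V=-W_1V+W_2\widehat{P}^g(V)$ using $A_\xi=-\mathrm{Id}$ and $A_\ell=\tfrac12\dot\gamma(0)=\widehat{P}^g$, then split the normal equation into its $\xi$- and $\ell$-components via the $\nabla^\perp$-parallelism of the frame and \eqref{II2345} at $u=0$. Your explicit use of the $g$-self-adjointness of $\widehat{P}^g$ and the nondegeneracy of $g$ simply spells out what the paper calls a direct computation.
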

\begin{remark}
{\rm System $(\ref{sistecs})$ coincides exactly with the classical equations characterizing parallel sections of the normal conformal tractor bundle, written in terms of the $g$-decomposition introduced in Remarks \ref{12012025ythbggfd} and \ref{20012025}. In particular, this shows that the present formulation $(\ref{17012025utrhgf})$ is fully equivalent to the standard tractor description. For the classical approach, see \cite{G2026}.}
\end{remark}

\hspace{2cm}
\noindent{\bf Statements and Declarations}
\begin{itemize}
\item Funding: The author is  partially supported  by Spanish MICINN and ERDF project PID2024-156031NB-I00.
\item Competing Interests: The author has no relevant financial or non-financial interests to disclose.
\end{itemize}


\begin{thebibliography}{999}

\bibitem{Cap1} A. {\v C}ap and A.R. Gover, {\rm Tractor calculi for parabolic geometries}, {\it Trans. Amer. Math. Soc.}, {\bf 354} (2002), 1511--1548. 


\bibitem{Cap} A. {\v C}ap and A.R. Gover, {\rm Standard tractors and the conformal ambient metric construction}, {\it Ann. Global Anal. Geom.}, {\bf 24}(3) (2003), 231--259. 


\bibitem{CS09} A. \v{C}ap and J. Slov\'{a}k, {\it Parabolic Geometries I: Background and General Theory}, Math. Surveys Monogr. \textbf{154}, AMS, 2009. 


\bibitem{G20262} A.R. Gover, Almost Einstein and Poincaré–Einstein manifolds in Riemannian signature, {\it J. Geom. Phys.}, {\bf 60} (2010), 182--204.


\bibitem{G2026} A.R. Gover and P. Nurowski, Obstructions to conformally Einstein metrics in $n$ dimensions, {\it J. Geom. Phys.}, {\bf 56} (2006), 450--484.



\bibitem{MP03192} R. Morón and F.J. Palomo, Normal tractor conformal bundles and codimension two spacelike submanifolds in Lorentzian manifolds, {\it Differ. Geom. Appl.}, \textbf{82} (2022), Paper No. 101897, 10 pp.

\bibitem{MP03193} R. Morón and F.J. Palomo, Codimension two spacelike submanifolds in Lorentzian manifolds and conformal structures, {\it J. Math. Anal. Appl.}, \textbf{536} (2024), No. 1, Paper No. 128170.


\bibitem{One83} B. O'Neill,  {\it Semi-Riemannian Geometry with Applications to Relativity}, {\rm Academic Press}, New York, 1983. 



\bibitem{PPR} O. Palmas,   F.J. Palomo  and A. Romero, On the total mean curvature of a compact space-like submanifold in Lorentz–Minkowski spacetime, {\it Proc. Roy. Soc. Edinburgh Sect. A,} {\bf 148} (2018), 199--210. 


\bibitem{PR13} F.J. Palomo and A. Romero, On spacelike surfaces in four-dimensional Lorentz-Minkowski spacetime
through a light cone, {\it Proc. Roy. Soc. Edinburgh Sect. A,}, {\bf 143} (2013), 881--892. 


\bibitem{Thomas} T.Y. Thomas,  {\rm On conformal geometry}, {\it Proc.  Nat.  Acad.  Sci.  USA}, {\bf 12} (1926),  352--359. 

\end{thebibliography}
\end{document}